\DeclareRobustCommand{\SkipTocEntry}[3]{}
\newcommand\@dotsep{4.5}
\def\@tocline#1#2#3#4#5#6#7{\relax
  \ifnum #1>\c@tocdepth 
  \else
    \par \addpenalty\@secpenalty\addvspace{#2}%
    \begingroup \hyphenpenalty\@M
    \@ifempty{#4}{%
      \@tempdima\csname r@tocindent\number#1\endcsname\relax
    }{%
      \@tempdima#4\relax
    }%
    \parindent\z@ \leftskip#3\relax \advance\leftskip\@tempdima\relax
    \rightskip\@pnumwidth plus1em \parfillskip-\@pnumwidth
    #5\leavevmode #6\relax
    \leaders\hbox{$\m@th
      \mkern \@dotsep mu\hbox{.}\mkern \@dotsep mu$}\hfill
    \hbox to\@pnumwidth{\@tocpagenum{#7}}\par
    \nobreak
    \endgroup
  \fi}
\DeclareFontFamily{OT1}{rsfs}{}
\DeclareFontShape{OT1}{rsfs}{n}{it}{<-> rsfs10}{}
\DeclareMathAlphabet{\curly}{OT1}{rsfs}{n}{it}
\renewcommand\O{\mathcal O}
\newcommand\LL{\mathbb L}
\newcommand\PP{\mathbb P}
\newcommand\C{\mathbb C}
\newcommand\Q{\mathbb Q}
\newcommand\R{\mathbb R}
\newcommand\Z{\mathbb Z}
\renewcommand\t{\mathfrak t}
\newcommand\udot{^{\bullet}}
\renewcommand\;{\hspace{1pt}}
\newcommand{\rt}[1]{\stackrel{#1\,}{\longrightarrow}}
\newcommand{\Rt}[1]{\stackrel{#1\,}{\longrightarrow}}
\newcommand{\RT}[2]{\xymatrix@C=#1pt{\ar[r]^{#2}&}}
\newcommand\To{\longrightarrow}
\newcommand\into{\hookrightarrow}
\newcommand\ito{\ar@{^{ (}->}[r]}
\newcommand{\Into}{\ensuremath{\lhook\joinrel\relbar\joinrel\rightarrow}}
\renewcommand\_{^{}_}
\newcommand\take{\backslash}
\newfont{\bigtimesfont}{cmsy10 scaled \magstep5}
\newcommand{\bigtimes}{\mathop{\lower0.9ex\hbox{\bigtimesfont\symbol2}}}
\newcommand\Gr{\operatorname{Gr}}
\newcommand\rk{\operatorname{rank}}
\newcommand\vd{\operatorname{vd}}
\newcommand\vir{\operatorname{vir}}
\newcommand\Bl{\operatorname{Bl}}
\newcommand\Hom{\operatorname{Hom}}
\newcommand\Ext{\operatorname{Ext}}
\newcommand\Spec{\operatorname{Spec}\;}
\newcommand\Sym{\operatorname{Sym}}
\newcommand\Ob{\operatorname{Ob}}
\newcommand\Crit{\operatorname{Crit}\;}
\newcommand\beq[1]{\begin{equation}\label{#1}}
\newcommand\eeq{\end{equation}}
\newcommand\beqa{\begin{eqnarray*}}
\newcommand\eeqa{\end{eqnarray*}}
\makeatletter \@addtoreset{equation}{section} \makeatother
\newtheorem{thm}[equation]{Theorem}
\newtheorem{lem}[equation]{Lemma}
\newtheorem{prop}[equation]{Proposition}
\newenvironment{rmk}{\noindent\textbf{Remark}.}{}
\theoremstyle{definition}
\newtheorem{eg}{Example}[section]
\DeclareRobustCommand{\SkipTocEntry}[4]{}
\begin{document}

\title{Virtual signed Euler characteristics}
\author{Yunfeng Jiang and Richard P Thomas}

\begin{abstract}
%
%
Roughly speaking, to any space $M$ with perfect obstruction theory we associate a space $N$ with \emph{symmetric} perfect obstruction theory.
It is a cone over $M$ given by the dual of the obstruction sheaf of $M$, and contains $M$ as its zero section. It is locally the critical locus of a function.

More precisely, in the language of derived algebraic geometry, to any quasi-smooth space $M$ we associate its $(-1)$-\mbox{shifted cotangent bundle $N$.} \vskip 5pt

By localising from $N$ to its $\C^*$-fixed locus $M$ this gives five notions of virtual signed Euler characteristic of $M$:
\begin{enumerate}
\item The Ciocan-Fontanine-Kapranov/Fantechi-G\"ottsche signed virtual Euler characteristic of $M$ defined using its own obstruction theory,
\item Graber-Pandharipande's virtual Atiyah-Bott localisation of the virtual cycle of $N$ to $M$,
\item Behrend's Kai-weighted Euler characteristic localisation of the virtual cycle of $N$ to $M$,
\item Kiem-Li's cosection localisation of the virtual cycle of $N$ to $M$,
\item $(-1)^{\vd}$ times by the topological Euler characteristic of $M$.
\end{enumerate}
Our main result is that (1)=(2) and (3)=(4)=(5). The first two are deformation invariant while the last three are not.
\end{abstract}

\maketitle \vspace{-5mm}
\tableofcontents

\section{Introduction}
Spaces (schemes or Deligne-Mumford stacks) $M$ with perfect obstruction theory $E\udot\to\LL_M$ \cite{BF} have a virtual dimension
$$
\vd=\rk(E\udot)
$$
and a natural $\vd$-dimensional virtual cycle
$$
[M]^{\vir}\in A_{\vd}(M)\To H_{2\!\;\vd}(M)
$$
over which one can integrate cohomology classes to give invariants. Such spaces usually arise from moduli problems.

Spaces $N$ with \emph{symmetric} obstruction theory \cite{BehDT} carry a \emph{0-dimensional} virtual cycle over which one can integrate 1 to give a virtual signed Euler characteristic of $N$. When $N$ is smooth and proper, the invariant is
$$
\int_{[N]^{\vir}}1\ =\ c\_{\dim N}(\Omega_N)\,=\,(-1)^{\dim N}e(N),
$$
where $e(\,\cdot\,)$ denotes the topological Euler characteristic. Even when $N$ is singular (but still compact) its invariant is a \emph{weighted} Euler characteristic
$$
\int_{[N]^{\vir}}1\ =\ e\big(N,\chi^N\big),
$$
weighted by the Behrend function $\chi^N$ \cite{BehDT} -- a canonical constructible function $N\to\Z$.

In this paper we pass from the first setting to the second to define various notions of signed virtual Euler characteristic of $M$. From $M$ we construct a space $N$ with \emph{symmetric} perfect obstruction theory. It is a cone over $M$, containing $M$ as its zero section.

When $M$ is smooth of the correct virtual dimension, $N=M$. When $M$ is merely smooth (or a local complete intesection), $N$ is the total space of the dual of the obstruction bundle $\Ob_M=h^1((E\udot)^\vee)$ over $M$,
$$
N=\Ob_M^*\To M.
$$
For general $M$ it is the cone $\Spec\Sym\udot\Ob_M\to M$; see Section \ref{constr} for details.

$N$ actually satisfies a stronger property than having a symmetric obstruction theory: it is locally the critical locus of a function. The model is the following. Locally we can write $M$ as the zero locus of a section $s$ of a vector bundle $E\rt\pi A$ over a smooth ambient space $A$, such that the perfect obstruction theory of $M$ is the natural one
$$
E\udot\ =\ \big\{E^*|_M\Rt{ds}\Omega_A|_M\big\}\To\LL_M.
$$
Consider $s$ as a function $\widetilde s$ on the total space Tot$\;(E^*)$ of the bundle $E^*$, linear on the fibres. Then $N$ is the critical locus of $\widetilde s$,
\beq{critN}
N=\,\Crit(\widetilde s\;).
\eeq
In the language of derived algebraic geometry, $N$ is $(-1)$-shifted symplectic \cite{PTVV} because it is the $(-1)$-shifted cotangent bundle of the quasi-smooth space underlying $(M,E\udot)$. \medskip

\begin{rmk} In fact there is an obstruction to globalising the symmetric obstruction theory on $N$ given by the local construction \eqref{critN}. This vanishes if we make the assumption that $(M,E\udot)$ admits the structure of a quasi-smooth derived space; we can then set $N:=T^*_M[-1]$. This is also quasi-smooth and so gives rise to an underlying space with perfect (symmetric) obstruction theory.

In an earlier draft of this paper we applied \cite{Sch} to $E\udot$ to construct such a quasi-smooth derived structure on $M$. Since it now seems there are some problems with \cite{Sch}, we instead make the \emph{assumption} that $(M,E\udot)$ comes from a quasi-smooth derived space. In the examples which occur in nature $M$ is a moduli space with a canonical quasi-smooth derived structure.
\end{rmk} \bigskip

Although $N$ is noncompact it carries a natural $\C^*$-action -- scaling the fibers of $N\to M$ -- with compact fixed locus $M$. Therefore we can define a numerical invariant from its 0-dimensional virtual class by localising to $M$. There are many ways to do this localisation, but whenever $M$ is smooth of the correct dimension they all give the same answer
$$
c\_{\;\dim M}(\Omega_M)\ =\ (-1)^{\dim M}e(M).
$$
However, in general there is no obvious reason why they should all agree. So a priori, we get the following four different notions of virtual signed Euler characteristic of $M$.\footnote{See Section $3.n$ for the definition of the $n$th virtual signed Euler characteristic.}
\begin{enumerate}
\item The Ciocan-Fontanine-Kapranov/Fantechi-G\"ottsche signed virtual Euler characteristic $\int_{[M]^{\vir}}c_{\vd}(E\udot)$ of $M$ \cite{CK,FG}. This is defined entirely in terms of $M$ and its obstruction theory, without use of $N$.
\item Graber-Pandharipande's virtual Atiyah-Bott localisation \cite{GP} of the virtual cycle of $N$ to $M$.
\item Kai localisation of the weighted Euler characteristic $e\big(N,\chi^N\big)$ to $M$. Here $\chi^N$ denotes the Behrend function \cite{BehDT} of $N$. Since $N\setminus M$ carries a \emph{free} $\C^*$-action which preserves $\chi^N$, its contribution vanishes, giving the localisation $e\big(N,\chi^N\big)=e\big(M,\chi^N|_M\big)$.
\item Kiem-Li's cosection localisation \cite{KL} of the virtual cycle of $N$ to $M$. The $\C^*$-action defines a canonical Euler vector field on $N$. By the symmetry of $N$'s obstruction theory, this defines a cosection $\Ob_N\to\O_N$. In such situations Kiem and Li give a way to localise the virtual cycle of $N$ to the zeros of the cosection, i.e. to $M$. 
\end{enumerate}

If $N$ were compact, these definitions would all give the same answer $\int_{[N]^{\vir}}1$. However, even when $M$ is smooth of too high a dimension, it is easy to calculate that they can give different answers. Suppose $\Ob_M$ admits a regular section cutting out a smooth $\vd$-dimensional representative $M^{\vir}\subset M$ of the virtual cycle, then (1) and (2) equal
$$
(-1)^{\vd\,}e(M^{\vir}),
$$
whereas (3) and (4) give
$$
(-1)^{\vd\,}e(M).
$$
This turns out to be part of a more general phenomenon.

\begin{thm} Suppose that $M$ is a projective scheme with perfect obstruction theory $E\udot$ arising as $\pi_0$ of a quasi-smooth derived scheme. Then $(1)=(2)$ and $(3)=(4)=(-1)^{\vd\;}e(M)$. The first two are deformation invariant while the last two are not.
\end{thm}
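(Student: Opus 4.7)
I would prove $(1)=(2)$ by direct $\C^*$-localisation and $(3)=(4)=(5)$ by identifying both with the Behrend-weighted Euler characteristic of $M$, using throughout the local model $N=\Crit(\widetilde{s})\subset\mathrm{Tot}(E^*)$ of \eqref{critN}.

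For $(1)=(2)$: since $N=T^*_M[-1]$ in the derived sense, the virtual tangent complex of $N$ restricted to $M$ splits $\C^*$-equivariantly as $T^{\vir}_M\oplus\LL^{\vir}_M[-1](1)$, with $(1)$ the weight-$1$ character. The fixed summand is the obstruction theory of $M$; the moving summand is the virtual normal bundle. Graber--Pandharipande \cite{GP} then gives
\[
\int_{[N]^{\vir}_{\mathrm{loc}}}\!1\,=\,\int_{[M]^{\vir}}\frac{e^{\C^*}\!\bigl(\Omega_A|_M(1)\bigr)}{e^{\C^*}\!\bigl(E^*|_M(1)\bigr)}.
\]
Expanding each equivariant Euler class as $\sum_i c_i(\cdot)\,t^{\mathrm{rk}-i}$ and using the identity $c(E\udot)=c(\Omega_A|_M)/c(E^*|_M)$ of total Chern classes, the ratio simplifies to $\sum_i c_i(E\udot)\,t^{\vd-i}$. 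Only its degree-$\vd$ coefficient pairs non-trivially with the $\vd$-dimensional cycle $[M]^{\vir}$, giving $\int_{[M]^{\vir}}c_{\vd}(E\udot)=(1)$.

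For $(3)=(5)$: the free $\C^*$-action on $N\setminus M$ preserves $\chi^N$, so $(3)=e(N,\chi^N)=e(M,\chi^N|_M)$; it therefore suffices to show $\chi^N|_M\equiv(-1)^{\vd}$. By Behrend's critical-locus formula \cite{BehDT}, for $p=(x_0,0)\in M$ with ambient dimension $a+e=\dim A+\rk E$,
\[
\chi^N(p)=(-1)^{a+e}\bigl(1-\chi(F_{\widetilde{s},p})\bigr).
\]
The linearity of $\widetilde{s}=\xi\cdot s(x)$ in $\xi$ is essential: projecting the Milnor fibre onto $\C^a$ realises it, for $0<|\varepsilon|\ll\delta\ll1$, as a fibration with contractible affine-disk fibres over a set homotopy equivalent to $B_\delta^{(a)}\setminus(M\cap B_\delta^{(a)})$. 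By the conic-structure theorem, $M\cap B_\delta^{(a)}$ retracts to $\{x_0\}$, so $\chi(F_{\widetilde{s},p})=1-1=0$ and $\chi^N(p)=(-1)^{a+e}=(-1)^{\vd}$ since $2e$ is even. Hence $(3)=(-1)^{\vd}e(M)$. For $(4)=(5)$: the Euler vector field of the scaling action, pushed through the symmetric isomorphism $T^{\vir}_N\simeq\LL^{\vir}_N[1]$, becomes a cosection $\sigma:\Ob_N\to\O_N$ with zero locus $M$; the standard comparison of Kiem--Li localisation \cite{KL} with Behrend weighting for symmetric obstruction theories identifies the localised $0$-dimensional virtual cycle with $e(M,\chi^N|_M)=(-1)^{\vd}e(M)$.

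Items $(1),(2)$ are manifestly deformation invariant as integrals of a Chern class of $E\udot$ against $[M]^{\vir}$; items $(3)$--$(5)$ are not, as shown by the introduction's example in which $\Ob_M$ admits a regular section cutting out a smooth $M^{\vir}\subsetneq M$ with $e(M)\neq e(M^{\vir})$. The main obstacle is the Milnor-fibre step: the contractible-fibre fibration and the conic retraction must be justified for arbitrary singularities of $M$, and it is precisely the linearity of $\widetilde{s}$ in $\xi$ that makes the calculation tractable.
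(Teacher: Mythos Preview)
Your arguments for $(1)=(2)$ and for $(3)=(5)$ are essentially the paper's own: the equivariant Euler class computation for the virtual normal bundle is the same as in Section~\ref{GP}, and your Milnor-fibre calculation (projecting to the base, contractible affine-fibre over the complement of $s^{-1}(0)$, conic retraction) is the content of the Proposition in Section~\ref{ordinary}.

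The genuine gap is your treatment of $(3)=(4)$. You write that ``the standard comparison of Kiem--Li localisation with Behrend weighting for symmetric obstruction theories identifies the localised $0$-dimensional virtual cycle with $e(M,\chi^N|_M)$''. There is no such standard comparison available here. Behrend's theorem $\int_{[N]^{\vir}}1=e(N,\chi^N)$ requires $N$ to be \emph{proper}, and our $N$ is a noncompact cone. The Kiem--Li localised class $[N]^{\vir}_{\mathrm{loc}}\in A_0(M)$ is defined by a completely different recipe (blow up along $M$, split off the proper transform of the cone into the kernel $K$, intersect there, push down). Equating its degree with $e(N,\chi^N)$ is precisely the work of Section~\ref{equal}: one embeds $N$ globally in a smooth $\widetilde A=\mathrm{Tot}(E^*)$ over a projective ambient $A$, perturbs the zero section of $\Omega_{\widetilde A}$ by $d\psi(r)$ for a proper cutoff $\psi$ of the hermitian norm, shows the intersection $C_N\cdot\Gamma_{d\psi(r)}$ is compact, and then identifies this number simultaneously with $e(N,\chi^N)$ via the Kashiwara index theorem \cite{Ka} and with $\deg[N]^{\vir}_{\mathrm{loc}}$ via Kiem--Li's small-perturbation result \cite[Proposition A.1]{KL}. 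None of these ingredients is routine, and none can be replaced by an appeal to the compact case. Your proposal as written does not supply this bridge.
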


\noindent\textbf{Plan.} In Section \ref{constr} we construct $N$ and its obstruction theory. After giving a local description we are forced (briefly!) to use derived algebraic geometry to give a definitive global construction in \eqref{defnNN}. We also give an example showing how the construction arises in nature from moduli of sheaves. The expert can skip straight to Section \ref{four}, where we review the definitions of the four virtual signed Euler characteristics. We also prove the easiest relations between them: $(1)=(2)$ and, through examples, $(1)\ne(3)$ in general. Section \ref{equal} is devoted to proving $(3)=(4)$, while Section \ref{ordinary} shows that $(3)$ gives the ordinary signed Euler characteristic of $M$. \medskip

\noindent\textbf{Acknowledgements.} It was Davesh Maulik who explained to us that (3) should equal $(-1)^{\vd\;}e(M)$, for which we are very grateful.

This paper was motivated by the papers \cite{CL} and \cite[Section 2.2]{VW} and by ``cotangent field theories" in the language of Costello \cite{Co}. We would like to thank Paolo Aluffi, Kai Behrend, Jun Li, J\o rgen Rennemo and Ed Segal for useful conversations, Jon Pridham, Timo Sch\"urg, Bertrand To\"en and Gabriele Vezzosi for discussions about \cite{Sch}, and two thorough referees for many suggested improvements.

The first author was partially supported by NFGRF, University of Kansas, and a Simons Foundation Collaboration Grant 311837, and the second author by an EPSRC Programme Grant EP/G06170X/1.
\medskip

\noindent\textbf{Notation.} Throughout we work for simplicity with a complex projective scheme $M$ with perfect obstruction theory $E\udot\to\LL_M$. In fact all the arguments extend with obvious minor changes to the case when $M$ is a Deligne-Mumford stack, on replacing our use of the Kashiwara index theorem \cite{Ka} with Maulik-Treumann's orbifold version \cite{MT} in Section \ref{equal}.

From \eqref{derived} we have to assume that $(M,E\udot)$ is the truncation of a quasi-smooth derived scheme $M^{\mathrm{der}}$. We use the notation $E^{-i}:=E_i^*$ for dual vector bundles, reserving ${}^\vee$ for the \emph{derived} dual of coherent sheaves and complexes.

\section{The construction of $N$} \label{constr}
\subsection*{Abelian cones}
For $F$ a coherent sheaf over $M$, there is an associated cone
$$
C(F):=\Spec\Sym^\bullet F\Rt{\pi_F}M
$$
over $M$. Cones of this form are called \emph{abelian} in \cite[Section 1]{BF}. The grading on $\Sym^\bullet F$ endows $C(F)$ with a $\C^*$-action 
$$\ \C^*\times C(F)\To C(F)$$
induced by the map
$$\Sym^\bullet F[x,x^{-1}]\longleftarrow\Sym^\bullet F\ \ $$
that takes $s\in \Sym^i F$ to $sx^i$. Its fixed locus is the zero section $M\subset C(F)$ defined by the ideal $\Sym^{\ge1}F$.

When $F$ is locally free $C(F)=\mathrm{Tot}(F^*)$ is the total space of the dual vector bundle. More generally, for any $F$, the fibre of $C(F)$ over a closed point $p\in M$ is the vector space $(F|_p)^*$. In fact $C(F)$ represents the functor from $M$-schemes to sets that takes $f\colon S\to M$ to $\Hom_S(f^*F,\O_S)$.

\begin{lem} \label{cotan}
Given a locally free resolution $E_0\rt\phi E_1\to F\to0$, the cone $C(F)\to M$ inherits a perfect \emph{relative} obstruction theory over $M$ given by
$$
\big\{\!\!\xymatrix@=25pt{\pi_F^*E_0 \ar[r]^{\pi_F^*\phi\ }& \pi_F^*E_1}\!\!\big\}\To\LL_{C(F)/M}.
$$
Here $\pi_F^*E_1$ is in degree 0. In particular, taking $h^0$ gives $\Omega_{C(F)/M}\cong\pi_F^*F$.
\end{lem}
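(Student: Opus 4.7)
The plan is to realise $C(F)$ as the zero scheme of an explicit section of a pulled-back vector bundle on a smooth $M$-scheme, and then invoke the standard Behrend--Fantechi construction of a perfect relative obstruction theory for such a zero locus.

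First, the surjection $E_1\twoheadrightarrow F$ induces a surjection $\Sym\udot E_1\twoheadrightarrow\Sym\udot F$, and hence a closed embedding $C(F)\into V:=C(E_1)=\mathrm{Tot}(E_1^*)$, with projection $\pi\colon V\to M$. On $V$ there is a tautological evaluation $\tau\colon\pi^*E_1\to\O_V$, $(p,v;e)\mapsto v(e)$; pre-composing $\tau$ with $\pi^*\phi\colon\pi^*E_0\to\pi^*E_1$ produces a canonical section
$$\widetilde\phi\in H^0\big(V,\pi^*E_0^*\big).$$
I would then verify that $Z(\widetilde\phi)=C(F)$ as subschemes of $V$: the ideal of $C(F)$ is $\ker(\Sym\udot E_1\to\Sym\udot F)$, which is generated by $\phi(E_0)\subset E_1$ in degree one inside $\Sym\udot E_1$, and on $V$ these generators are precisely the components of $\widetilde\phi$.

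With this description in hand I would apply the standard construction \cite{BF} of a relative perfect obstruction theory for a zero locus of a section. Since $\pi$ is smooth with $\Omega_{V/M}\cong\pi^*E_1$, it yields
$$
\big\{\pi_F^*E_0\rt{d\widetilde\phi}\pi_F^*E_1\big\}\To\LL_{C(F)/M},
$$
with $\pi_F^*E_1$ in degree zero. A direct computation identifies $d\widetilde\phi$ with $\pi_F^*\phi$: for $e_0\in E_0|_p$, the function $\widetilde\phi(e_0)$ on $V$ equals $v\mapsto v(\phi(e_0))$, linear in the fibre coordinate $v\in(E_1|_p)^*$, so its vertical derivative at $(p,v)$ is the element $\phi(e_0)\in E_1|_p=\Omega_{V/M}|_{(p,v)}$.

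To conclude, right-exactness of $\pi_F^*$ applied to $E_0\to E_1\to F\to0$ gives $h^0=\coker(\pi_F^*\phi)\cong\pi_F^*F$, recovering $\Omega_{C(F)/M}$ via the conormal sequence. The main point I expect to require care is the check that this two-term complex really constitutes a perfect obstruction theory in the sense of Behrend--Fantechi --- in particular that the induced map on $h^{-1}$ surjects onto $h^{-1}(\LL_{C(F)/M})$. This should reduce to the observation that the components of $\widetilde\phi$ by construction generate the ideal $I$ of $C(F)$ in $V$, so $\pi_F^*E_0$ surjects onto the conormal sheaf $I/I^2$, which in the smooth-over-$M$ setting is precisely what one needs.
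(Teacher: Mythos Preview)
Your proposal is correct and follows essentially the same route as the paper: realise $C(F)$ as the zero locus of the tautological section $\pi^*\phi^*(\tau)$ of $\pi^*E^0$ on the smooth $M$-scheme $\mathrm{Tot}(E_1^*)$, then invoke the standard Behrend--Fantechi perfect relative obstruction theory for a zero locus over a smooth base. The paper simply states the resulting complex and reviews the general construction \eqref{relpot}, whereas you additionally spell out the identification $d\widetilde\phi=\pi_F^*\phi$ and the $h^{-1}$ check, but the argument is the same.
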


\begin{proof}
The resolution gives an exact sequence
$$
\phi(E_0)\otimes\Sym^{\bullet\;-1}E_1\To\Sym^\bullet E_1\To\Sym^\bullet F\To0.
$$
That is,
$$
\Spec\Sym^\bullet F\ \subset\ \Spec\Sym^\bullet E_1
$$
with ideal generated by $\phi(E_0)$. Letting $\tau$ denote the tautological section of $\pi_{E_1}^*E^{-1}$, this says that\smallskip
\beq{sentence}
C(F) \text{ is cut out of }C(E_1)=\mathrm{Tot}\;(E^{-1}) \text{ by the section $\pi_{E_1}^*\phi^*(\tau)$ of } \pi_{E_1}^*E^0.
\eeq
Therefore by a standard construction which we review below, $C(F)$ inherits a natural perfect \emph{relative} obstruction theory over $M$, given by
$$
\big\{\!\!\xymatrix@=25pt{\pi_F^*E_0 \ar[r]^{\pi_F^*\phi\ } & \pi_F^*E_1}\!\!\big\}\To\LL_{C(F)/M}.
$$
More generally, suppose we have a \emph{smooth map}\footnote{We do not require $A$ or $B$ to be smooth; just the map $A\to B$ which in our application is $C(E_1)=\mathrm{Tot}(E^{-1})\to M$. Over this $(E,s):=(\pi_{E_1}^*E^0,\pi_{E_1}^*\phi^*(\tau))$ cuts out $X=C(F)$.} $A\to B$ and a section $s$ of a vector bundle $E\to A$. Then the zero scheme $X$ of $s$ inherits a natural perfect \emph{relative} obstruction theory
\beq{relpot}
\big\{E^*|_X\Rt{ds}\Omega_{A/B}|\_X\big\}\To\LL_{X/B}\;.
\eeq
To describe the maps, we use the embedding of $A$ as the zero section of Tot$\;(E)$, and then identify $E^*$ with the first factor of
$$
\Omega_{\;\mathrm{Tot}\;(\hspace{-1pt}E)/B}\big|\_A\ \cong\ E^*\oplus\Omega_{A/B}\;.
$$
This maps to the relative cotangent sheaf of the graph $\Gamma_s\subset\mathrm{Tot}\;(E)$ restricted to $A\cap\Gamma_s=X$, giving a map
\beq{2nd}
E^*\big|\_X\To\Omega_{\Gamma_s/B}\big|\_X\;.
\eeq
Via the isomorphism $\Gamma_s\cong A$ (by projection) this gives the first arrow of \eqref{relpot}. To describe the second we rewrite $\Omega_{\Gamma_s/B}|\_X$ as $\LL_{\Gamma_s/B}|\_X$, which then maps to $\LL_{X/B}$. Its composition with the arrow \eqref{2nd} is zero, and the result is indeed a perfect obstruction theory \cite[Section 6]{BF}.
\end{proof}

\subsection*{Constructing $N$}
We fix a perfect obstruction theory
$$
E\udot\To\LL_M
$$
of virtual dimension
$$
\vd:=\rk(E\udot)
$$
on the complex projective scheme $M$.

Applying the results of the last section to the obstruction sheaf $$\Ob_M\!:=h^1\big((E\udot)^\vee\big),$$ we define $\pi=\pi_N\colon N\to M$ to be the associated abelian cone,\footnote{Another way to describe $N$ is as the coarse moduli space of the vector bundle stack $h^1/h^0\big((E\udot)^\vee\big)$ of \cite[Section 2]{BF}.}
\beq{defnN}
N:=C(\Ob_M)=\Spec\Sym^\bullet(\Ob_M)\Rt\pi M.
\eeq
Writing $E\udot$ as $E^{-1}\to E^0$, we get the exact sequence
$$
E_0\To E_1\To\Ob_M\To0.
$$
Therefore by Lemma \ref{cotan}, $N$ inherits a natural perfect relative obstruction theory over $M$,
\beq{cotanN}
\pi^*(E\udot)^\vee[1]\To\LL_{N/M}\;.
\eeq

\subsection*{Local model} Locally we may choose a presentation of $(M,E\udot)$ as the zero locus of a section $s$ of a vector bundle $E\to A$ over a smooth ambient space $A$, such that the resulting complex
$$
\big\{T_A|_M\Rt{ds}E|_M\big\} \quad\text{is}\quad
\big\{E_0\To E_1\big\}=(E\udot)^\vee.
$$
Therefore, by \eqref{sentence}, $N=C(\Ob_M)$ is cut out of Tot$\;(E^*)|_M$ by the section $\pi_E^*(ds)^*(\tau)$ of $\pi_E^*\Omega_A|_M$. In turn Tot$\;(E^*)|_M$ is cut out of Tot$\;(E^*)$ by $\pi_E^*s$. Therefore the ideal of $N$ in the smooth ambient space Tot$\;(E^*)$ is
\beq{ideal}
\big(\pi_E^*s,\,\pi_E^*(Ds)^*(\tau)\big),
\eeq
where we have chosen any holomorphic connection $D$ on $E\to A$ by shrinking $A$ if necessary.

Thinking of the section $s$ of $E\to A$ as a linear function $\widetilde s$ on the fibres of Tot$\;(E^*)$, we find that its critical locus is $N$.

\begin{prop}\label{locCS}
$N\subset\mathrm{Tot}\;(E^*)$ is the critical locus of the function
$$
\widetilde s\colon\mathrm{Tot}\;(E^*)\to\C\;.
$$
\end{prop}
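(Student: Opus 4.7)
The plan is to compute $d\widetilde{s}$ in a local frame and to identify its zero scheme with the ideal \eqref{ideal} already shown to cut out $N\subset\mathrm{Tot}(E^*)$.

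First I would pick a holomorphic connection $D$ on $E\to A$ (possible after shrinking $A$ as in the discussion preceding \eqref{ideal}); this produces a splitting
$$T_{\mathrm{Tot}(E^*)}\big|_{(a,\xi)}\ \cong\ T_aA\,\oplus\,E^*_a$$
into horizontal and vertical summands at each point $(a,\xi)$ with $a\in A$ and $\xi\in E^*_a$. Since $\widetilde{s}(a,\xi)=\langle\xi,s(a)\rangle$ is linear on the fibres of $\pi_E$, the vertical component of $d\widetilde{s}$ at $(a,\xi)$ is the linear functional $\eta\mapsto\langle\eta,s(a)\rangle$ on $E^*_a$, which vanishes for all $\eta$ iff $s(a)=0$, i.e.\ iff the first generator $\pi_E^*s$ of \eqref{ideal} vanishes at $(a,\xi)$. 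The horizontal component sends the $D$-horizontal lift of $v\in T_aA$ to $\langle\xi,D_vs\rangle$; as $v$ varies, this assembles into the covector $(Ds)^*(\xi)\in\Omega_{A,a}$, so the horizontal component vanishes iff the second generator $\pi_E^*(Ds)^*(\tau)$ of \eqref{ideal} vanishes at $(a,\xi)$.

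Combining the two, the scheme-theoretic critical locus of $\widetilde{s}$ is cut out by the ideal $\bigl(\pi_E^*s,\,\pi_E^*(Ds)^*(\tau)\bigr)$, which by \eqref{ideal} is exactly the ideal of $N$, giving $\Crit(\widetilde{s})=N$. I do not expect any genuine obstacle beyond setting up the horizontal/vertical decomposition carefully: once it is in place the computation is just the Leibniz rule applied to $\langle\xi,s(a)\rangle$. A useful sanity check is that, although the individual generator $\pi_E^*(Ds)^*(\tau)$ depends on the choice of connection $D$, its class modulo $\pi_E^*s$ does not, which is consistent with the fact that $\Crit(\widetilde{s})$ is intrinsically defined independent of $D$.
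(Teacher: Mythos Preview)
Your argument is correct and is essentially the paper's own first proof: use a connection to split $d\widetilde s$ into a vertical one-form (giving $\pi_E^*s$) and a horizontal one-form (giving $\pi_E^*(Ds)^*(\tau)$), then match with \eqref{ideal}. The paper also records a second variant in local trivialising coordinates $x_i,y_j$ where $\widetilde s=\sum_j s_j y_j$ and $d\widetilde s=\sum_j y_j\,ds_j+\sum_j s_j\,dy_j$, which you might find a cleaner way to phrase the scheme-theoretic identification.
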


\begin{proof} The only difficulty       here is notational. One approach is to write
$$\widetilde s=\big\langle\pi_E^*s,\tau\big\rangle=\pi_E^*s^*(\tau)$$
in terms of the tautological section $\tau$ of $\pi_E^*E^*$. Using the connection $D$ on $E$, we differentiate
$$
d\;\widetilde s=(D\tau)(\pi_E^*s)+\tau(\pi_E^*Ds).
$$
This vanishes precisely where both summands vanish, since the first is a vertical one-form and the second is horizontal. Thus $\Crit(\widetilde s)$ has ideal \eqref{ideal}, as required.

Alternatively, one can work in local coordinates $x_i$ for $A$. Trivialising $E$ with a basis of sections $e_j$, we get a dual basis $f_j$ for $E^*$ and coordinates $y_j$ on the fibres of Tot$\;(E^*)$.

Then we can write $s=\sum_js_je_j,\ \tau=\sum_jy_jf_j$ and
$$\widetilde s=\sum_j s_jy_j.$$
Therefore
$$
d\;\widetilde s=\sum_jy_jds_j+\sum_js_jdy_j
=\big\langle\tau,\pi_E^*Ds\big\rangle+\sum_js_jdy_j
$$
with zero scheme defined by the ideal
$$
\big(\pi_{E^*}^*(Ds)^*(\tau),\,\pi_E^*s_1,\,\pi_E^*s_2,\,\ldots\big).
$$
This is the same as \eqref{ideal}.
\end{proof}

\subsection*{Global model}
In particular, $N$ has a natural local \emph{symmetric} perfect obstruction theory \cite{BehDT}.
Globally, we would like to fit the relative obstruction theory $\pi^*(E\udot)^\vee[1]\to\LL_{N/M}$ \eqref{cotanN} of $\pi: N\to M$ together with the obstruction theory $\pi^*E\udot\to\pi^*\LL_M$ for $N$ to give an absolute symmetric perfect obstruction theory for $N$. That is we would like to find $F\udot$ to fill in the diagram
\beq{dotted}
\xymatrix@=16pt{
\pi^*E\udot \ar[d]\ar@{..>}[r] & F\udot\! \ar@{..>}[d]\ar@{..>}[r] & \pi^*(E\udot)^\vee[1] \ar[d] \\
\pi^*\LL_M \ar[r]& \LL_N \ar[r]& \LL_{N/M}.\!}
\eeq
So we need to specify a (symmetric) map $\pi^*(E\udot)^\vee\to\pi^*E\udot$; taking the cone would give $F\udot$ and induce the other arrows.
Letting $K$ denote the cone of the left hand vertical arrow gives the diagram
\beq{910}
\xymatrix@=16pt{
\pi^*(E\udot)^\vee \ar[d]\ar@{..>}[r]& \pi^*E\udot \ar[d] \\
\LL_{N/M}[-1] \ar[r]& \pi^*\LL_M \ar[d] \\
& K,}
\eeq
so the obstruction to filling in the dotted arrow is the vanishing of the down-right-down composition
\beq{compos}
\pi^*(E\udot)^\vee\To K.
\eeq

Denote the down-right composition $\pi^*(E\udot)^\vee\to\LL_{N/M}[-1]\to\pi^*\LL_M$ by
$$
A\in\Hom_N\big(\pi^*(E\udot)^\vee,\pi^*\LL_M\big)\ =\ \Hom_M\big((E\udot)^\vee,\LL_M\otimes\pi_*\O_N\big).
$$
Under the $\C^*$-action on $\pi_*\O_N=\Sym\udot\Ob$, its degree 1 part is
$$
A_1\in\Hom_M\big((E\udot)^\vee,\LL_M\otimes\Ob\big).
$$
By \cite[Section IV.2.3]{Ill}, it is the image of the Atiyah class of $(E\udot)^\vee$ under the map $(E\udot)^\vee[1]\to\Ob$:
$$
\mathrm{At}_{(E\udot)^\vee}\in\Ext^1_M\big((E\udot)^\vee,(E\udot)^\vee\otimes\LL_M\big)\To
\Hom_M\big((E\udot)^\vee,\Ob\otimes\LL_M\big).
$$
We let $\eta$ denote the remaining map $\pi^*\LL_M\to K$ in \eqref{910}; it is the Kodaira-Spencer class of the inclusion of $M$ into the derived thickening $M^\eta$ constructed in \cite[Section 2.1]{Sch}.\footnote{$M^\eta$ is Sch\"urg's first approximation to a quasi-smooth derived scheme giving rise to $M$ and its obstruction theory.}

Therefore the degree 1 part of the obstruction \eqref{compos} is a projection of the product $\eta\circ\mathrm{At}_{(E\udot)^\vee}$ of Atiyah and Kodaira-Spencer classes. This product is the obstruction to lifting the complex $(E\udot)^\vee$ from $M$ to the thickening $M^\eta$, which is identified in \cite{Sch} as the first in a sequence of obstructions to finding a quasi-smooth derived scheme whose truncation is $(M,E\udot)$.

Unfortunately there seems to be no general reason to expect it to vanish unless $(M,E\udot)$ indeed arises from a quasi-smooth derived structure. So from now on we make this assumption.
\medskip

\begin{rmk} In fact all of our virtual signed Euler characteristics are defined by localisation from $N$ to $M$, and so for their \emph{existence} we only require the existence of the perfect obstruction theory $F\udot\to\LL_N$ on the zero section $M\subset N$. Since the zero section splits the projection $\pi$, the horizontal triangles of \eqref{dotted} split there, and
$$F\udot|_M\cong E\udot\oplus(E\udot)^\vee[1]$$
exists just fine. Only our proof that (3)=(4) in Section \ref{equal} currently uses the global obstruction theory of $N$ (and we conjecture the result holds without it). Therefore the reader uncomfortable with derived algebraic geometry can ignore the next section and proceed straight to the definitions of the various Euler characteristics.
\end{rmk}

\subsection*{Construction using derived geometry}
So we use the language of derived algebraic geometry \cite{HAG} for a paragraph.

Hereon in we assume that the projective scheme with perfect obstruction theory $(M,E\udot)$ arises from a quasi-smooth derived scheme $M^{\mathrm{der}}$.
That is,
\begin{itemize}
\item The truncation $\pi_0(M^{\mathrm{der}})$ is $M$,
\item The cotangent complex $\LL_{M^{\mathrm{der}}}$ is perfect of amplitude contained in $[-1,0]$, and
\item Its restriction $\LL_{M^{\mathrm{der}}}\big|_M$ to $M$ is $E\udot$ (with its canonical map to $\LL_M$).
\end{itemize}
Then we define
\beq{derived}
N^{\mathrm{der}}:=T^*_{M^{\mathrm{der}}}[-1]
\eeq
to be the $(-1)$-shifted cotangent bundle of $M^{\mathrm{der}}$. This is also quasi-smooth and (-1)-shifted symplectic \cite{PTVV}, so its truncation
\beq{defnNN}
(N,F\udot):=\big(\pi_0(N^{\mathrm{der}}),\LL_{N^{\mathrm{der}}}\big|_N\big)
\eeq
 is a scheme with symmetric perfect obstruction theory. It coincides with \eqref{defnN} as a scheme.
Letting $\pi^{\mathrm{der}}\colon N^{\mathrm{der}}\to M^{\mathrm{der}}$
denote the projection, the distinguished triangle \cite{HAG}
$$(\pi^{\mathrm{der}})^{*\;}\LL_{M^{\mathrm{der}}}\To \LL_{N^{\mathrm{der}}}\To \LL_{N^{\mathrm{der}}/M^{\mathrm{der}}}$$
restricts to $N\subset N^{\mathrm{der}}$ to give the diagram \eqref{dotted} with the dotted arrows filled in. On the zero section, $F\udot|_M\cong E\udot\oplus(E\udot)^\vee[1]$.

So we now forget all about derived algebraic geometry again, and use only $(N,F\udot)$.

\subsection*{Example: moduli of coherent sheaves on local surfaces}
This construction of $N$ from $M$ arises in nature when $M$ is a moduli space of stable sheaves on a projective surface $S$.

By pushing sheaves forward by the inclusion
$$
S\Into X:=\mathrm{Tot}\;(K_S)
$$
into the canonical bundle of $S$ we get an inclusion of $M$ into the moduli space of stable sheaves on the Calabi-Yau 3-fold $X$. By the usual spectral cover construction, stable sheaves on $X$ (finite over $S$) are the same as stable Higgs pairs
$$
(E,\phi) \text{ on $S$, where }\phi\in\Hom(E,E\otimes K_S).
$$
The cone $N$ is then the open set of Higgs pairs such that the underlying sheaf $E$ is stable on $S$.

Moduli spaces of stable sheaves carry canonical obstruction theories (and in fact derived structures \cite{TVa}). Fixing determinants (for simplicity) and removing the trace part of the obstruction complex gives a perfect obstruction theory on the coarse moduli space. Applying this on $S$ and $X$ gives obstruction theories on $M$ and $N$ compatible with the construction of this section.

Therefore we expect our paper to have applications to the S-duality conjecture of \cite{VW} for a 4-manifold which is a complex surface $S$. However it is still not clear to us which of the two distinct virtual Euler characteristics in this paper one should use on the moduli space of stable sheaves on $S$.

\section{The four virtual signed Euler characteristics}\label{four}

We now describe our four signed Euler characteristics $e_i(M),\ i=1,\ldots,4$. The first uses only $M$ in its construction; the rest all utilise $N$ by localising its virtual cycle to its zero section $M$ in different ways.

\subsection{The signed virtual Euler characteristic of Ciocan-Fontanine-Kapranov and Fantechi-G\"ottsche}\label{CFK-FG}
Thinking of $(E\udot)^\vee$ as the virtual (or derived) tangent bundle of $M$, one can form a virtual Euler characteristic by integrating against the virtual cycle \cite{CK, FG}:
$$
e_{\vir}(M):=\int_{[M]^{\vir}}c_{\vd}\big((E\udot)^\vee\big).
$$
Of more interest to us is the virtual version of the signed Euler characteristic which would be given classically by the top Chern class of the \emph{cotangent} bundle. So we consider $E\udot$ to be the virtual cotangent bundle of $M$ and define
\beq{e1def}
e_1(M):=\int_{[M]^{\vir}}c_{\vd}\big(E\udot\big).
\eeq
This is deformation invariant, and depends only on $M$ and its obstruction theory $E\udot$.

\subsection{Graber-Pandharipande localisation}\label{GP}
We use the virtual Atiyah-Bott localisation of Graber-Pandharipande on the virtual cycle of $N$. Its natural $\C^*$-action has fixed locus $M$, along which the horizontal triangles \eqref{dotted} split,
$$
F\udot\big|_M\ \cong\ E\udot\ \oplus\ (E\udot)^\vee\!\otimes\t^{-1}[1].
$$
Here we have made the $\C^*$-action explicit in the notation; $\t$ denotes the standard weight $1$ representation of $\C^*$.

The first summand is $\C^*$-fixed, and gives the obstruction theory for the fixed locus $M$. The second has $\C^*$-weight $-1$; its dual is called the virtual normal bundle,
$$
N^{\vir}\ \cong\ E\udot\otimes\t\,[-1].
$$
The recipe of \cite{GP} for the localisation of the $\vd=0$ virtual cycle of $N$ is
\beq{GP}
e_2(M):=\int_{[M]^{\vir}}\frac1{e(N^{\vir})}\,.
\eeq
Here $e$ denotes the $\C^*$-equivariant Euler class of $N^{\vir}$. Writing
$$
N^{\vir}=\big\{E^{-1}\otimes\t\to E^0\otimes\t\big\}
$$
with $E^{-1}\otimes\t$ in degree 0, it is defined to be
$$
e(N^{\vir})\ =\ \frac{c_{\mathrm{top}}^{T}(E^{-1}\otimes\t)}{c_{\mathrm{top}}^{T}(E^0\otimes\t)}
$$
in the localised cohomology group $$H_{T}^{*}(M,\Q)\otimes_{\Q[t]}\Q[\![t,t^{-1}]\!]\ \cong\ H^*(M,\Q)\otimes_{\Q}\Q[\![t,t^{-1}]\!].$$
Here we let $t:=c_1(\t)\in H^*(B\C^*,\Q)\cong\Q[t]$ denote the first Chern class of $\t$, the generator of the equivariant cohomology of $B\C^*$.

\begin{prop} The Graber-Pandharipande localised signed Euler characteristic \eqref{GP} equals the signed virtual Euler characteristic \eqref{e1def} of Ciocan-Fontanine-Kapranov/Fantechi-G\"ottsche,
$$e_1(M)=e_2(M).$$
\end{prop}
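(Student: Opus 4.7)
The plan is a direct Chern-class computation: expand $1/e(N^{\vir})$ in powers of $t$ via the splitting principle, then observe that pairing with $[M]^{\vir}$ kills every term but one.

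Concretely, I would write $E\udot=\{E^{-1}\to E^0\}$ with ranks $r_1$ and $r_0$ so that $\vd=r_0-r_1$, and let $y_1,\ldots,y_{r_0}$ and $x_1,\ldots,x_{r_1}$ be the Chern roots of $E^0$ and $E^{-1}$. The definition of $e(N^{\vir})$ recalled just above gives
$$\frac{1}{e(N^{\vir})}\ =\ \frac{\prod_j(y_j+t)}{\prod_i(x_i+t)}\ =\ t^{\vd}\,\frac{c_{1/t}(E^0)}{c_{1/t}(E^{-1})},$$
where $c_s(V):=\sum_k c_k(V)s^k$ is the Chern polynomial in a formal variable $s$. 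Since the ratio $c_s(E^0)/c_s(E^{-1})$ depends only on the $K$-theory class $[E^0]-[E^{-1}]=[E\udot]$, and not on the chosen two-term resolution, this repackages as
$$\frac{1}{e(N^{\vir})}\ =\ \sum_{k\ge 0} c_k(E\udot)\,t^{\vd-k}$$
in the localised equivariant cohomology.

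Pairing with $[M]^{\vir}\in H_{2\vd}(M)$ is then immediate: each $c_k(E\udot)$ has cohomological degree $2k$, so $\int_{[M]^{\vir}} c_k(E\udot)=0$ unless $k=\vd$, and only the coefficient of $t^0$ survives. This yields
$$e_2(M)\ =\ \int_{[M]^{\vir}}\frac{1}{e(N^{\vir})}\ =\ \int_{[M]^{\vir}} c_{\vd}(E\udot)\ =\ e_1(M),$$
and as a by-product confirms that the Graber-Pandharipande integral is $t$-independent, as of course it must be.

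There is no substantive obstacle here; the proof is purely a formal Chern-class manipulation. The only point worth checking carefully is the $K$-theoretic invariance of the right-hand side under change of two-term resolution of $E\udot$, which is what lets one identify the splitting-principle output with the intrinsic classes $c_k(E\udot)$ and thus run the argument independently of the presentation.
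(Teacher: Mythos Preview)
Your argument is correct and is essentially the same as the paper's: both expand $1/e(N^{\vir})$ as a ratio of equivariant top Chern classes, observe it is homogeneous of total degree $\vd$ in $t$ and the ordinary Chern classes, and conclude that only the $t^0$ coefficient $c_{\vd}(E\udot)$ pairs nontrivially with $[M]^{\vir}$. The only cosmetic difference is that you work via Chern roots and the Chern polynomial $c_s(V)=\sum_k c_k(V)s^k$, whereas the paper writes the expansion directly as $\big[c(E^0)/c(E^{-1})\big]_{\vd}$ after setting $t=1$; these are the same computation.
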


\begin{proof}
Let $r$ and $s=r+\vd$ denote the ranks of $E^{-1}$ and $E^0$ respectively. Then \eqref{GP} equals
$$
e_2(M)=\int_{[M]^{\vir}}\frac{c_s(E^0)+tc_{s-1}(E^0)+\ldots}{c_r(E^{-1})+tc_{r-1}(E^{-1})+\ldots}\,.
$$
The integrand is homogeneous of degree $s-r=\vd$, so only the $t^0$ coefficient has the correct degree $\vd$ over $M$ to have nonzero integral against $[M]^{\vir}$. Therefore we may set $t=1$ in the above to give
$$
e_2(M)=\int_{[M]^{\vir}}\left[\frac{c(E^0)}{c(E^{-1})}\right]_{\vd}\,,
$$
where $c(\,\cdot\,)$ denotes the total Chern class. But this is \eqref{e1def}.
\end{proof}

\subsection{Kai localisation}\label{Kai}
Behrend \cite{BehDT} defines a constructible function
$$
\chi^N\colon N\To\Z
$$
on any scheme $N$. When $N$ is compact with symmetric obstruction theory, he proves the degree of the virtual cycle is the $\chi^N$-weighted Euler characteristic of $N$:
\beq{weigh}
\int_{[N]^{\vir}}1\,=\,e\big(N,\chi^N\big):=\sum_{i\in\Z}i\,e\big((\chi^N)^{-1}\{i\}\big).
\eeq
Since our $N$ is noncompact, the left hand side of \eqref{weigh} is not defined, but we can use the right hand side instead. Over $N\setminus M$ we get zero since there is a free $\C^*$-action preserving $\chi^N$. Therefore \eqref{weigh} localises to $M$ and we define
$$
e_3(M):=e\big(M,\chi^N|_M\big)=e\big(N,\chi^N\big).
$$

\begin{eg}
It is easy to see that $e_3(M)$ is not deformation invariant -- and not always equal to $e_1(M)$ -- by noting that it can be nonzero even if $M$ has negative virtual dimension. For instance if $M$ is a reduced point carrying a rank $-\vd>0$ obstruction space, then $N=\C^{|\vd|}$ and $e_3(M)=(-1)^{\vd}$. Of course $e_1(M)=0=e_2(M)$ in this situation.
\end{eg}

\begin{eg} \label{egs}
The simplest example in nonnegative virtual dimension is as follows. Let $M$ and its perfect obstruction theory be defined by the section $x^2-a^2$ of the trivial line bundle over $\C=\Spec\C[x]$.

For $a\ne0$, $M=N$ is two reduced points and both $e_1(M)$ and $e_3(M)$ equal 2. By deformation invariance the same is true of $e_1(M)$ when $a=0$. But now $N=\Spec\C[x,y]/(x^2,xy)=\;\Crit(x^2y)$ in $\C^2$ and a computation\footnote{For instance, one can show that the normal cone of $N$ is a copy of $\C$ over the $y$-axis plus a $\C^2$ with multiplicity 2 over the origin. Therefore its signed support \cite[Section 1.1]{BehDT} is twice the origin minus the $y$-axis. These are both smooth so their Euler obstructions are their characteristic functions; adding gives $\chi^N$.} shows that $\chi^N$ equals $-1$ away from the origin and $+1$ at the origin. In particular $e_3(M)=1$ is not deformation invariant.

This example also demonstrates that $e\big(M,\chi^N|_M\big)\ne e(M,\chi^M)$ in general; the latter gives 2 here.

It is a good exercise in the definitions of the Kiem-Li localised invariant $e_4(M)$ of the next section to calculate in these examples and verify it equals $e_3(M)$.
\end{eg}

\subsection{Kiem-Li's cosection localisation}

Fix a 2-term locally free resolution $F^{-1}\to F^0$ of the obstruction theory of $N$. Behrend and Fantechi \cite[Section 5]{BF} define a normal cone
$$
C_N\subset F_1=(F^{-1})^*
$$ 
by pulling back the intrinsic normal cone from $h^1/h^0\big((F\udot)^\vee\big)$. The intersection of $C_N$ with the zero section $0_{F_1}$ gives $[N]^{\vir}$.

The $\C^*$-action on $N$ induces an Euler vector field $v$ on $N$. By the symmetry of the obstruction theory on $N$, we get what Kiem-Li call a \emph{cosection}, i.e. a map
\beq{cose}
\sigma: \Ob_N\,\cong\,\Omega_N\Rt{}\O_N.
\eeq
It is surjective on $N\take M$, so we expect the virtual cycle to vanish away from the zero locus $M$ of $v$. And indeed Kiem and Li define a localised virtual cycle in $A_0(M)$ as follows. \medskip

From the cosection map \eqref{cose} we get the composition
\beq{cosec}
F_1\To\Ob_N\Rt{v}\O_N
\eeq
whose image is the ideal sheaf of $M\subset N$. Therefore its pull back to the blow up of $N$ along $M$,
$$
p\colon\!\Bl_M(N)\To N,
$$
has image $\O_{\Bl_M(N)}(-E)\subset\O_{\Bl_M(N)}$ (where $E$ is the exceptional divisor). Letting $K$ denote its kernel, we get the exact sequence of vector bundles
\beq{kern}
0\To K\To p^*F_1\To\O_{\Bl_M(N)}(-E)\To0.
\eeq
We can write
$$
C_N\,=\,C_1+C_2\,=\,C_1+p_*(\;\overline C_2),
$$
where $C_1$ is supported on $M$, by setting $\overline C_2$ to be the proper transform of $C_N$. Then Kiem and Li show that $\overline C_2$ lies (set- or cycle-theoretically) in $K$, so they define
\beq{zeroc}
[N]^{\vir}_{\mathrm{loc}}:=0_{F_1}^!(C_1)-p_*\Big([E].(0_K^!\overline C_2)\Big)\,\in\,A_0(M).
\eeq
The first term is already supported on $M$. The second is too because we intersect with $E$. This gives our fourth virtual signed Euler characteristic,
\beq{e4}
e_4(M):=\int_{[N]^{\vir}_{\mathrm{loc}}}1\;,
\eeq
the length of the zero cycle \eqref{zeroc} in $A_0(M)$. \medskip
%

\begin{rmk}
Example \ref{egs}, together with the equality $e_3(M)=e_4(M)$ to be proved in the next Section, show that $e_4(M)$ need \emph{not} be deformation invariant as we deform $(M,E\udot)$. This may seem surprising since Kiem and Li prove a deformation invariance result \cite[Theorem 5.2]{KL}, but under an assumption which does not hold here.

Given a family $(M_t,E_t\udot)$ over a smooth base $B\ni t$, we get an associated family $(N_t,F_t\udot)$. There is an obstruction map from the (pullback to $N$ of) $T_tB$ to the obstruction sheaf $\Ob_{N_t}$. The condition imposed in \cite{KL} amounts to asking that the cosection be zero on such obstructions, and this is what fails in 
Example \ref{egs}.

As the referee pointed out, another way to look at this is that the cosection \ref{cose} is not $\C^*$-invariant (it has weight 1). If it were invariant then $e_4(M)$ would be deformation invariant by the recent paper \cite{CKL}.
\end{rmk}

\section{Equality of Kai and Kiem-Li localisations}\label{equal}

Since $M$ is projective we may fix an embedding $\iota\colon M\into A$ in a smooth projective ambient space $A$. Then writing $\iota_{*\!}\Ob_M$ as the quotient of a locally free sheaf $E$ over $A$, we get an embedding of $N$
$$
N=\Spec\Sym\udot\Ob_M\ \subset\ \Spec\Sym\udot E=\mathrm{Tot}\;(E^*)
$$
in a vector bundle $E^*$ over $A$.

Let $\widetilde A:=\,$Tot$\;(E^*)\to A$ denote the resulting smooth total space. Its $\C^*$-action induces an Euler vector field $v$ which restricts on $N\subset\widetilde A$ to the Euler vector field defining the cosection \eqref{cose}. Intepreting the restriction map
\beq{surj}
\Omega_{\widetilde A}\big|\_N\To\Omega_N\To0
\eeq
as a surjection from the vector bundle $\Omega_{\!\widetilde A}|\_N$ to the obstruction sheaf $\Ob_N=h^1\big((F\udot)^\vee\big)$, standard Behrend-Fantechi obstruction theory (e.g. \cite[Section 5]{BF}, \cite[Section 2]{BehDT}) gives a normal cone
$$
C_N\,\subset\ \mathrm{Tot}\;(\Omega_{\!\widetilde A})\big|\_N
$$
by pulling back the intrinsic normal cone from $h^1/h^0\big((F\udot)^\vee\big)$. It is \emph{conic Lagrangian} inside the total space of $\Omega_{\!\widetilde A}$ \cite[Theorem 4.9]{BehDT}, and its intersection with the zero section $\widetilde A$ defines $[N]^{\vir}$, but the noncompactness of $N$ makes this uninteresting.

So we perturb the 0-section of $\Omega_{\!\widetilde A}$ to make the intersection compact, and to localise it near $M$. Pick a hermitian metric $|\,\cdot\,|$ on $E^*\to A$, and consider it as a function
$$
r\colon\widetilde A\To\R, \qquad r(e)=|e|,
$$
measuring the size of $e\in E^*$ up the fibres of $E^*\to A$.

For $\epsilon>0$ let $\psi\colon[0,\infty)\to[0,\infty)$ be any smooth function satisfying
\beq{conds}
\psi(x)=\left\{\!\!\begin{array}{cc} 0 & x\le1, \\ \epsilon x & x\ge2, \end{array}\right.
\eeq
and perturb the 0-section of $\Omega_{\!\widetilde A}$ to the (non-holomorphic!) graph of $d\psi(r)$:
\beq{graph}
\Gamma_{d\psi(r)}\ \subset\ \Omega_{\!\widetilde A}.
\eeq

\begin{lem} If $M$ is compact then the intersection of $\Gamma_{\!d\psi(r)}$ with the cone $C_N$ is compact. In particular, their topological intersection is defined.
\end{lem}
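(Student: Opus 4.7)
The plan is to show the set-theoretic intersection is closed and contained in a compact set. A point of $\Gamma_{d\psi(r)}\cap C_N$ is a pair $(p,\xi)$ with $p\in N$ and $\xi=d\psi(r)|_p\in C_N|_p$, so the intersection is the graph of the continuous $1$-form $d\psi(r)$ over the closed subset
$$S := \{\, p\in N : d\psi(r)|_p\in C_N|_p\,\} \subset N.$$
Since $N$ lies in the vector bundle $E^*|_M$ over the compact base $M$, each sublevel set $N\cap\{r\le R\}$ is compact. So it suffices to bound $r$ on $S$.

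The bound is a moment map argument. The $\C^*$-action on $\widetilde A$ lifts to a Hamiltonian action on $T^*\widetilde A$ whose real $\R_{>0}$-subaction has moment map $\mu(p,\xi)=\xi(v_\R(p))$, where $v_\R$ is the real Euler vector field and satisfies $v_\R(r)=r$. The cycle $C_N$ is $\C^*$-invariant (being built functorially from the $\C^*$-invariant data $N\subset\widetilde A$), conic along the cotangent fibres, and Lagrangian by \cite[Theorem 4.9]{BehDT}. I claim $\mu\equiv 0$ on $C_N$: by Kashiwara's theorem $C_N$ is a sum $\sum n_i\,T^*_{Z_i}\widetilde A$ for some $\C^*$-invariant subvarieties $Z_i\subset N$, and at a smooth point of each $Z_i$ the field $v_\R$ is tangent to $Z_i$, hence annihilated by every conormal covector; by continuity $\mu=0$ on the entire conormal $T^*_{Z_i}\widetilde A$.

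Now refine the choice of $\psi$ in \eqref{conds} to demand in addition that $\psi'(t)>0$ for all $t>1$. Then at any $p\in N$ with $r(p)>1$,
$$\mu\bigl(p, d\psi(r)|_p\bigr) = \psi'(r(p))\,dr(v_\R)|_p = \psi'(r(p))\,r(p) > 0,$$
so $d\psi(r)|_p\notin C_N|_p$. Hence $S\subset\{p\in N:r(p)\le 1\}$, which is compact; and the intersection is the continuous graph of $d\psi(r)$ over $S$, hence compact.

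The main obstacle is the moment-map vanishing step. The Kashiwara decomposition into conormals of $\C^*$-invariant subvarieties is the cleanest path; alternatively one can run the argument on the smooth locus of each irreducible component of $C_N$, using that the Hamiltonian vector field of $\mu$ is tangent to any invariant Lagrangian (forcing $\mu|_{C_N}$ locally constant) and that conicness together with linearity of $\mu$ in $\xi$ then forces this constant to vanish.
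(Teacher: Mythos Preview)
Your argument is correct and lands on the same mechanism as the paper: both proofs show the intersection is empty once $r$ is large by evaluating the Euler vector field against covectors. The paper phrases this as ``$C_N$ lies in the kernel $K$ of the cosection'' (contraction with the holomorphic Euler field $v$), a fact already set up in Section~3.4 from Kiem--Li, and then observes that on the graph $\Gamma_{d\psi(r)}$ this contraction equals $D_v(\psi(r))=\epsilon r\ne0$ for $r\ge2$. You instead re-derive the vanishing on $C_N$ from scratch via the conic Lagrangian property \cite[Theorem~4.9]{BehDT} and the decomposition into conormals of $\C^*$-invariant strata. Your route is more self-contained and explains \emph{why} $C_N\subset K$ (it is really the statement that an invariant conic Lagrangian lies in the zero level of the moment map), whereas the paper's route is shorter because the Kiem--Li machinery is already on the table.

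One small point: you should not ``refine the choice of $\psi$'' to demand $\psi'>0$ on $(1,\infty)$. The lemma is stated for the $\psi$ already fixed in \eqref{conds}, and your extra condition is unnecessary anyway --- for $r(p)\ge2$ one has $\psi'(r(p))=\epsilon>0$ directly, giving $S\subset\{r\le2\}$, which is exactly the bound the paper obtains.
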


\begin{proof}
Compose the cosection map \eqref{cose} with the surjection \eqref{surj},
$$
\Omega_{\widetilde A}\big|\_N\To\Omega_N\To\O_N.
$$
By construction the composition is (the restriction to $N$ of)
contraction with the Euler vector field $v$ on $\widetilde A=\,$Tot$\;(E^*)\to A$.

Restricted to the graph \eqref{graph} the composition defines a function (smooth rather than holomorphic, since the graph is not holomorphic). By calculation the function is 
$$D_v(\psi(r))=\epsilon r$$ 
outside the neighbourhood $\{r\le2\}$ of $A\subset$\,Tot$(\;E^*)$. Since this is nonzero, the graph $\Gamma_{\!d\psi(r)}$ does \emph{not} intersect the kernel $K$ of \eqref{kern}. But $C_N$ \emph{does} lie in $K$, so their intersection is empty outside the compact neighbourhood $\{r\le2\}$ of $A\subset\widetilde A$.
\end{proof}

By \cite[Corollary 4.15]{BehDT} the cone $C_N$ is the characteristic cycle $C\;\!C(\chi^N)$ of the constructible Behrend function $\chi^N\colon N\to\Z$ (extended by zero to $\widetilde A$). Its intersection with $\Gamma_{\!d\psi(r)}$ is compact, and the sets $\{e\in \widetilde A: \psi(r(e))\le t\}$ are also compact for all $t$ by the properness of $\psi(r)$. 
Therefore we can apply the Kashiwara index theorem to any constructible sheaf $\mathcal{F}$ on $N$ whose pointwise Euler characteristic is $\chi^N$. The characteristic cycle of $\mathcal F$, denoted by $\widetilde{S\;\!S\,}\!(\mathcal F)$ in \cite{Ka}, then equals $C\;\!C(\chi^N)=C_N$ so by \cite[Theorem 4.2]{Ka},\footnote{Kashiwara uses the orientation induced by the real symplectic form $\omega=d\theta=\sum_idp_idq_i$ on Tot\;$(\Omega_{\widetilde A})$, where $\theta=\sum_ip_idq_i$ is the canonical real one-form. Here $(q_i)_{i=1}^n$ are local \emph{real} coordinates on $\widetilde A$, and $n=2\dim_{\C}\widetilde A$ is the real dimension of $\widetilde A$.  Since $(-1)^{n(n+1)/2}\bigwedge_i(dp_i\wedge dq_i)=\bigwedge_i dq_i\wedge\bigwedge dp_i$ gives the standard complex orientation, our intersection $\widetilde{S\;\!S\,}\!(\mathcal F)\cdot\Gamma_{d\psi(r)}$ differs from his by the sign $(-1)^{n(n+1)/2}$.}
$$
C_N\cdot\Gamma_{d\psi(r)}\ =\ \widetilde{S\;\!S\,}\!(\mathcal F)\cdot\Gamma_{d\psi(r)}\ =\ \chi(\mathcal F)\ =\ e\big(N,\chi^N\big).
$$
And, for $\epsilon\ll1$, the graph $\Gamma_{d\psi(r)}$ is a \emph{small perturbation} of the 0-section in the sense of \cite[Appendix A]{KL}. Therefore \cite[Proposition A.1]{KL} equates $C_N\cdot\Gamma_{d\psi(r)}$ with $[N]^{\vir}_{\mathrm{loc}}$ (after pushforward to the neighbourhood $\{|e|\le2\}$).
We conclude that
\[
e_4(M)\,=\,\int_{[N]^{\vir}_{\mathrm{loc}}}1\,=\,C_N\cdot\Gamma_{d\psi(r)}\,=\,
e\big(N,\chi^N\big)\,=\,e\big(M,\chi^N|_M\big)\,=\,e_3(M).
\]

\section{Ordinary signed Euler characteristic} \label{ordinary}

The equality of $e_3(M)$ and $(-1)^{\vd\;}e(M)$ was explained to us by Davesh Maulik. We work locally in the model of Proposition \ref{locCS}.
That is, we have a smooth ambient space $A$, a bundle $E\rt\pi A$ and the section $s\in\Gamma(E)$ cutting out $M\subset A$. Then $s$ defines a function $\widetilde s$ on Tot$\;(E^*)$, linear on the fibres,
\beq{formu}
\widetilde s\;(a,e)=\langle e,s(a)\rangle,
\eeq
whose critical locus is $N=\Crit(\widetilde s\;)\subset\mathrm{Tot}\;(E^*)$.

Recall Behrend's formula \cite[Section 1.2]{BehDT}
\beq{PW}
\chi^N(p)=(-1)^{\dim\mathrm{Tot}\;(E^*)}\big(1-e(F_p)\big)
\eeq
for the Kai function of $N$. Here
$$
(-1)^{\dim\mathrm{Tot}\;(E^*)}=(-1)^{\dim A-\rk E^*}=(-1)^{\vd}
$$
and
\beq{str}
F_p\ =\ \widetilde s^{\,\;-1}(\delta)\cap B_\epsilon(p), \qquad 0<|\delta|\ll\epsilon\ll1,
\eeq
is the Milnor fibre of $\widetilde s$ at $p$. The formula \eqref{PW} extends from $p\in N$ to any $p\in\;$Tot$(E^*)$, giving 0 outside $N=\Crit(\widetilde s\;)$.

We sketch Maulik's idea of working relative to $A$. The key fact is that by \eqref{formu} the fibre of $\widetilde s^{\,\;-1}(\delta)$ over $a\in A$ is an affine space if $s(a)\ne0$ and empty if $s(a)=0$. Therefore pushing \eqref{PW} down $N\to M\subset A$ (in the sense of Euler characteristic) gives
\beq{resu}
(-1)^{\vd}\mathrm{\ \ over\ }s^{-1}(0)=M\subset A
\eeq
and $0$ elsewhere. Since $N\take M$ has a free $S^1$-action (under which $\widetilde s$ is invariant) it contributes nothing to the pushdown, so we should find that
$$
\chi^N\big|_M\ =\ (-1)^{\vd},
$$
and hence $e_3(M)=(-1)^{\vd}e(M).$

This sketch can be made to work because the $\C^*$-action on $E^*$ acts with weight 1 on $\widetilde s$, allowing one to take $0<|\delta|\ll1$ in \eqref{str} \emph{uniformly} over the noncompact fibres of $E^*\to A$. For full details we refer to \cite[Theorem A.1]{Dave}. In fact Davison proves the much more sophisticated result that the pushdown of the perverse sheaf of vanishing cycles on $N$ is a shift of the constant sheaf on $M$; taking Euler characteristics gives \eqref{resu}. For completeness we give an elementary proof from first principles at the level of Euler characteristics.

\begin{prop}
The Kai function of $N$ is the constant $(-1)^{\vd}$ on $M$, $$\chi^N\big|_M\equiv(-1)^{\vd}.$$
\end{prop}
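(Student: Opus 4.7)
The plan is to compute the Milnor fibre $F_p$ of $\widetilde s$ at each point $p=(a_0,0)\in M$ and show $e(F_p)=0$; combined with Behrend's formula \eqref{PW} and the parity identity $(-1)^{\dim\mathrm{Tot}(E^*)}=(-1)^{\vd}$, this yields $\chi^N(p)=(-1)^{\vd}$. The guiding idea, already sketched in the text, is that linearity of $\widetilde s$ in the fibre direction of $\mathrm{Tot}(E^*)\to A$ makes the projection $\pi\colon F_p\to A$ a map with empty fibres over $M$ and contractible fibres elsewhere, so one can reduce to Euler characteristic computations downstairs in $A$.

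First I would exploit the explicit formula $\widetilde s\;(a,e)=\langle e,s(a)\rangle$. For $a$ with $s(a)\ne0$ the restriction of $\widetilde s$ to $E^*_a$ is a non-zero linear functional, so $\widetilde s^{\,-1}(\delta)\cap E^*_a$ is an affine hyperplane, while for $a\in M=\{s=0\}$ it is empty (since $\delta\ne0$). After choosing a local trivialisation of $E^*\to A$ near $p$ and a ball $B_\epsilon(p)$ adapted to the resulting product structure, each non-empty fibre of $\pi|_{F_p}$ is the intersection of such an affine hyperplane with an open convex set, hence contractible. By standard additivity and multiplicativity of Euler characteristic on semi-algebraic (or constructible) families with constant-Euler-characteristic fibres, one concludes
$$
e(F_p)\,=\,e\big(\pi(F_p)\big).
$$

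Next I would identify $\pi(F_p)$. For $0<|\delta|\ll\epsilon\ll1$ the projection $\pi(F_p)$ equals $B^A_\epsilon(a_0)\setminus M$ (the fibre over $a$ is non-empty precisely when $s(a)\ne0$ and $|\delta|$ is small enough relative to $|s(a)|$; a $\C^*$-rescaling argument, using that $\widetilde s$ has weight $1$, lets one take $\delta$ as small as needed uniformly). Now $B^A_\epsilon(a_0)$ is a ball and $B^A_\epsilon(a_0)\cap M$ is contractible to $a_0$ by the local conic structure theorem for complex analytic sets (Łojasiewicz–Milnor), so
$$
e\big(\pi(F_p)\big)\,=\,e\big(B^A_\epsilon(a_0)\big)-e\big(B^A_\epsilon(a_0)\cap M\big)\,=\,1-1\,=\,0.
$$
Combining with \eqref{PW} gives $\chi^N(p)=(-1)^{\vd}$.

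The main obstacle is the first equality $e(F_p)=e(\pi(F_p))$: $\pi|_{F_p}$ is not literally a fibre bundle, since the radius of admissible $e$ in the Milnor ball shrinks as $a\to M$ and the fibre varies with $s(a)$. One must verify that $\pi|_{F_p}$ is nonetheless a map of constructible spaces with constant Euler characteristic fibre $1$ over its (constructible) image, so that multiplicativity of $e$ applies. The cleanest way I know is to choose a hermitian metric on $E^*$ adapted to the local product structure and use the $\C^*$-rescaling on the fibres to trivialise the family of Milnor slices up to homotopy; then an elementary stratification argument (or Verdier's generic local triviality for the constructible map $\pi|_{F_p}$) gives the required multiplicativity. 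This reproves, at the level of Euler characteristics, the perverse-sheaf statement of \cite[Theorem A.1]{Dave}.
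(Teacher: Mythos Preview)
Your approach is essentially the paper's: project the Milnor fibre $F_p$ down to $A$, use that each non-empty fibre is a ball (an affine hyperplane intersected with a ball), and reduce to showing the base has Euler characteristic zero. One imprecision worth flagging: for any fixed $\delta>0$ the image $\pi(F_p)$ is \emph{not} $B^A_\epsilon(a_0)\setminus M$ --- it is the proper subset $\{a:|s(a)|^2(\epsilon^2-|a|^2)\ge|\delta|^2\}$, since near $M$ or near $\partial B_\epsilon$ the fibre direction is too short to reach the level $\delta$. Your $\C^*$-rescaling does not fix this (it changes $e$, not the ball), and you identify the wrong step as the obstacle. The paper handles this cleanly by introducing $f(a)=|s(a)|^2(\epsilon^2-|a|^2)$, shrinking $\delta$ so that $f$ has no critical values in $(0,|\delta|^2]$, and then retracting $\{f\ge|\delta|^2\}$ onto $f^{-1}(0,\infty)=\stackrel{\circ}{B}_\epsilon(a_0)\setminus s^{-1}(0)$; this replaces your appeal to constructible multiplicativity by an explicit homotopy equivalence $F_p\simeq\stackrel{\circ}{B}_\epsilon(a_0)\setminus s^{-1}(0)$.
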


\begin{proof}
We work at a point $p=(a_0,0)\in M\subset N\subset\mathrm{Tot}(E^*\to A)$. Shrinking $A$ in the analytic topology, we take it to be an open set of a vector space on which we fix a hermitian metric, with $a_0$ the origin. We may also assume that $E$ is trivial on this open set, and give it the trivial hermitian metric. Then
$$
F_p=\Big\{(a,e)\in\mathrm{Tot}\;(E^*)\colon|a|^2+|e|^2\le\epsilon^2,\ \langle e,s(a)\rangle=\delta\Big\}.
$$
Over a fixed $a\in A$ the fibre of $F_p$ is 
\beq{check}
\big\{e\colon\langle e,s(a)\rangle=\delta\mathrm{\ and\ }|e|^2\le\epsilon^2-|a|^2\big\}.
\eeq
The smallest $e$ solving $\langle e,s(a)\rangle=\delta$ has $|e|=|\delta|\big/|s(a)|$, so \eqref{check} is nonempty if and only if $|\delta|^2\big/|s(a)|^2\le\epsilon^2-|a|^2$, if and only if
\beq{check2}
f(a):=|s(a)|^2\big(\epsilon^2-|a|^2\big)\,\ge\ |\delta|^2.
\eeq
Since \eqref{check} is the intersection of a ball with an affine space, it is either contractible or empty.
Therefore $F_p$ is homotopy equivalent to the locus of $a\in A$ satisfying \eqref{check2}:
$$
F_p\ \simeq\,\big\{a\in A\colon f(a)\ge|\delta|^2\big\}.
$$
Shrinking $\delta$ if necessary, we may assume that $f$ has no critical values in the interval $\big(0,|\delta|^2\big]$. Thus
$$
F_p\,\simeq\,f^{-1}\big[|\delta|^2,\infty\big)\,\simeq\,f^{-1}(0,\infty)\,=\,
\stackrel{\circ}{B}\_\epsilon\!(a_0)\;\take\;s^{-1}(0).
$$
Since $s^{-1}(0)$ is a complex subvariety, the usual argument that Euler characteristic is motivic\footnote{By excision and Mayer-Vietoris we get the required formula, but with a correction from the Euler characteristic of the link of $s^{-1}(0)$. Its vanishing essentially follows from the vanishing of $e(S^{2n-1})$, where $n$ is the codimension of $s^{-1}(0)$.} now gives
$$
e(F_p)\ =\ 1-e\big(s^{-1}(0)\cap B_\epsilon(a_0)\big).
$$
But $s^{-1}(0)\cap B_\epsilon(a_0)$ is homeomorphic to the cone on its intersection with $\partial B_\epsilon(a_0)$ for $\epsilon\ll1$, so has Euler characteristic 1. Therefore $e(F_p)=0$.

Substituting into \eqref{PW} gives
\[
\chi^N(a_0,0)=(-1)^{\vd}. \qedhere
\]
\end{proof}
\medskip


\bibliographystyle{halphanum}
\bibliography{references}

\bigskip \noindent
{\tt y.jiang@ku.edu} \medskip

\noindent Department of Mathematics \\
\noindent University of Kansas \\
\noindent 405 Jayhawk Blvd \\
\noindent Lawrence, KS 66045. USA

\bigskip \noindent
{\tt richard.thomas@imperial.ac.uk} \medskip

\noindent Department of Mathematics \\
\noindent Imperial College London\\
\noindent London SW7 2AZ. UK

\end{document}